\documentclass[11pt]{article}

\usepackage{amsmath, amssymb, amsthm, amscd, geometry, amsfonts, graphicx, fancyhdr, xcolor}
\usepackage{hyperref}

\newtheorem{thm}{Theorem}[section]
\newtheorem{lemma}[thm]{Lemma}

\newtheorem{remark}[thm]{Remark}

\begin{document}

\title{Reconstruction of the Initial Condition for a Non-Homogeneous Heat Equation from Finite Measurements}


\author{
  R. Dollente, S. Frerichs, H. Nam, \\
  J. Shen, G. Walters, and Y. You \\[1em]
  \small Department of Mathematics \\
  \small Indiana University East \\
  \small Richmond, IN 47374, USA \\[0.5em]
  \small \texttt{\{radollen, sfrerich, heenam, jsh7, grmcwalt, youy\}@iu.edu}
}

\date{}

\maketitle

\begin{abstract}
We address the inverse problem of reconstructing the initial temperature distribution in a one-dimensional non-homogeneous heat equation with Dirichlet boundary conditions from a finite number of pointwise-in-time measurements at a single spatial location. We propose an explicit approximation scheme that incorporates the source term and utilizes a refined sequence of measurement times. Unlike previous approaches that often rely on exponentially growing observation times, our chosen time sequence allows for efficient recovery within a fixed time horizon. We establish algebraic convergence rates under suitable regularity assumptions on the initial data and the forcing, and we validate the theoretical sharpness with numerical experiments for varying numbers of measurements.
\end{abstract}

\noindent\textbf{Keywords:}
Inverse problems; non-homogeneous parabolic equations; discrete sampling; initial data recovery; heat equation.

\medskip

\noindent\textbf{Mathematics Subject Classification (2020):}
35K05; 35R30; 65M32.

\section{Introduction}

The reconstruction of initial data for parabolic equations from partial observations is a classical inverse problem with applications in heat conduction, diffusion processes, and control of distributed parameter systems; see, e.g., \cite{IsakovBook}. In the homogeneous one-dimensional heat equation, an influential contribution of DeVore and Zuazua \cite{DeVoreZuazua2014} provides a constructive procedure to recover the initial temperature from a finite sequence of measurements at a single spatial point, together with sharp convergence rates in suitable Sobolev scales.

Recently, there has been renewed interest in dynamical sampling and related strategies for recovering initial conditions or system parameters from later-time measurements. Aceska, Kim, and Pallage \cite{AceskaKimPallage2025} developed a general system-approximation framework based on samples collected at later times along a generalized temporal grid, allowing for non-uniform sampling patterns and time-variant systems. In a closely related work \cite{AceskaKimPallage2024}, they studied the specific problem of recovering an initial condition from later time samples, establishing identifiability conditions and convergence results for reconstruction algorithms that exploit the spectral properties of the underlying evolution operator.

On the numerical analysis side, Souleymane and Ammari \cite{SouleymaneAmmari2022} proposed a wavelet-based Galerkin method for identifying initial conditions in boundary value problems. Their approach expands the unknown initial data in a wavelet basis and solves the resulting discrete system, yielding accurate reconstructions and rigorous error estimates. While their methodology differs from the spectral recursion considered here, it highlights the effectiveness of multiresolution techniques for initial-condition identification.

In many practical applications, however, the underlying diffusion is driven by a non-trivial source term, meaning the dynamics are governed by a non-homogeneous heat equation. In this setting, the measurements contain contributions from both the unknown initial condition and the known forcing. Separating these contributions is essential to obtain stable reconstruction schemes. The goal of this paper is to extend the discrete sampling framework of \cite{DeVoreZuazua2014} to a non-homogeneous one-dimensional heat equation with Dirichlet boundary conditions.

A key contribution of our work is the use of a refined measurement time sequence. While the original DeVore--Zuazua strategy typically employs an exponentially growing sequence of times which may require measurements over a large time interval, our proposed sequence stays within a controlled horizon while providing sufficient data density to stably invert the heat operator. This choice improves the conditioning of the recursive reconstruction algorithm and allows for explicit error control even when the source term $F(x,t)$ is present.

We derive an explicit recursive algorithm which approximates the Fourier coefficients of the initial data from finitely many measurements and incorporates the contribution of the forcing through a suitable truncation parameter depending on the regularity of the source. Under assumptions on the Sobolev regularity of the initial data and on the spatial regularity of the forcing, we prove algebraic convergence of the reconstruction error.

\medskip

The remainder of this paper is structured as follows.
Section~\ref{sec:problem} introduces the non-homogeneous model and the functional setting. Section~\ref{sec:scheme} presents the reconstruction scheme and the main intermediate estimates. Section~\ref{sec:main-results} states and proves the convergence theorem. Section~\ref{sec:numerics} reports numerical experiments illustrating the performance of the method. Concluding remarks are discussed in Section~\ref{sec:conclusion}.

\section{Problem setting}
\label{sec:problem}

We consider the non-homogeneous heat equation with Dirichlet boundary conditions:
$$
\begin{cases}
u_t = u_{xx} + F(x,t), & 0 < x < \pi,\ t > 0,\\
u(0,t) = u(\pi,t) = 0, & t > 0,\\
u(x,0) = f(x), & 0 < x < \pi,
\end{cases}
$$
where $f$ is the unknown initial temperature and $F$ is a given source term. Throughout the paper, we assume that $F$ is sufficiently regular in space and time so that the series representations below are well defined.

Given a finite set of measurements $u(x_0, t_1), \dots, u(x_0,t_n)$ at a fixed spatial location $x_0 \in (0,\pi)$, our goal is to approximate the initial condition $f$.

We recall that the solution admits the standard sine-series representation
\[
u(x,t)
= \sum_{j=1}^{\infty} e^{-j^2 t} \hat{f}_j \sin(jx)
+ \sum_{j=1}^{\infty} e^{-j^2 t}
\left( \int_0^t e^{j^2 s} \hat{F}_j(s) \, ds \right) \sin(jx),
\]
where $\hat{f}_j$ and $\hat{F}_j(s)$ denote the Fourier sine coefficients of $f$ and $F(\cdot,s)$, respectively:
\[
\hat{f}_j = \frac{2}{\pi}\int_0^{\pi} f(x) \sin(jx) \,dx,\quad
\hat{F}_j(s) = \frac{2}{\pi}\int_0^{\pi} F(x,s) \sin(jx) \,dx.
\]
Evaluating at $x=x_0$ and $t=t_k$ yields
\begin{equation}\label{eq:ux0tk}
u(x_0, t_k)
= \sum_{j=1}^{\infty} e^{-j^2 t_k} \hat{f}_j \sin(j x_0)
+ \sum_{j=1}^{\infty} e^{-j^2 t_k}
\left( \int_0^{t_k} e^{j^2 s} \hat{F}_j(s) \, ds \right) \sin(j x_0).
\end{equation}
Noting that $u(x,0) = f(x) = \sum_{j=1}^{\infty} \hat{f}_j \sin(jx)$, we construct a truncated series approximation $\bar{f}_n$ using the first $\lceil n/2 \rceil$ modes, with coefficients $\bar{\hat{f}}_j$ determined recursively from the measurements:
\[
\bar{f}_n(x) = \sum_{j=1}^{\lceil n/2 \rceil} \bar{\hat{f}}_j \sin(jx).
\]

\section{Approximation scheme for the initial condition}
\label{sec:scheme}

We define the function class $\mathcal{F}_r$ for $r \ge 2$ as
\[
\mathcal{F}_r
:= \left\{ f \in L^2(0,\pi)\;\middle|\;
\sum_{j=1}^\infty j^{2r} |\hat{f}_j|^2 \le 1
\right\}.
\]
This describes a unit ball in a Sobolev-type space.

\begin{lemma}\label{lem:1}
If $f \in \mathcal{F}_r$, then for $k = 1,2,\ldots$,
\[
\sum_{j=k+1}^{\infty} \left|\hat{f}_j \sin(j x_0)\right|
\le \frac{k^{-r+1}}{r-1}.
\]
\end{lemma}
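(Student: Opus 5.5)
The plan is to avoid Cauchy--Schwarz on the tail and use instead a pointwise bound on each coefficient, followed by an integral comparison. Cauchy--Schwarz would give a bound of order $k^{-r+1/2}/\sqrt{2r-1}$. That is a better power of $k$, but its constant does not dominate $1/(r-1)$ uniformly in $r\ge 2$ when $k=1$: it would require $(r-1)^2\le 2r-1$, which fails for $r>2+\sqrt2$. The cruder route below gives exactly the stated constant.

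\emph{Step 1: pointwise decay of the coefficients.} Since $f\in\mathcal{F}_r$, every single term of the defining series is bounded by the whole sum. So $j^{2r}|\hat f_j|^2\le \sum_{i\ge1} i^{2r}|\hat f_i|^2\le 1$ for each $j$, which gives $|\hat f_j|\le j^{-r}$. Together with $|\sin(jx_0)|\le 1$ this yields
\[
\sum_{j=k+1}^{\infty}\left|\hat f_j\sin(jx_0)\right|\le \sum_{j=k+1}^{\infty} j^{-r}.
\]

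\emph{Step 2: integral comparison.} Because $x\mapsto x^{-r}$ is decreasing on $(0,\infty)$, we have $j^{-r}\le\int_{j-1}^{j}x^{-r}\,dx$ for $j\ge k+1\ge 2$. Summing over $j\ge k+1$ gives
\[
\sum_{j=k+1}^{\infty} j^{-r}\le \int_k^\infty x^{-r}\,dx=\frac{k^{-r+1}}{r-1},
\]
and the integral converges since $r\ge2>1$. This is exactly the claimed bound.

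There is no real obstacle here. The only point needing care is the choice of the pointwise bound in Step 1 over Cauchy--Schwarz, since only the former gives the constant $1/(r-1)$ for all $r\ge 2$ and all $k\ge1$. One should also check that the integral comparison starts at $x=k\ge 1$, where $x^{-r}$ is finite, so that the case $k=1$ is covered.
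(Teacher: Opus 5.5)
Your proof is correct and follows the paper's own argument exactly. Both derive the pointwise bound $|\hat f_j\sin(jx_0)|\le|\hat f_j|\le j^{-r}$ from the unit-ball condition, then compare $\sum_{j\ge k+1} j^{-r}$ with $\int_k^\infty x^{-r}\,dx$.
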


\begin{proof}
If $f \in \mathcal{F}_r$, then $|\hat{f}_j \sin(j x_0)| \le |\hat{f}_j| \le j^{-r}$ for $j \ge 1$. The result follows by estimating the sum with the integral $\int_k^\infty x^{-r}\,dx$.
\end{proof}

Isolating the $k$-th term in the spectral expansion \eqref{eq:ux0tk} at time $t_k$, we have 
\begin{align*}
\hat{f}_1 \sin(x_0)
&= e^{t_1} \Bigg[
u(x_0, t_1)
- \sum_{j=2}^\infty e^{-j^2 t_1} \hat{f}_j \sin(j x_0)
- \sum_{j=1}^{\infty} e^{-j^2 t_1}
\left( \int_0^{t_1} e^{j^2 s} \hat{F}_j(s) ds \right) \sin(j x_0)\Bigg],\\
\hat{f}_k \sin(k x_0)
&= e^{k^2 t_k} \Bigg[
u(x_0, t_k)
- \sum_{j=1}^{k-1} e^{-j^2 t_k} \hat{f}_j \sin(j x_0)
- \sum_{j=k+1}^\infty e^{-j^2 t_k} \hat{f}_j \sin(j x_0) \\
&\hspace{3.5cm}
- \sum_{j=1}^{\infty} e^{-j^2 t_k}
\left( \int_0^{t_k} e^{j^2 s} \hat{F}_j(s) ds \right) \sin(j x_0)\Bigg],
\quad k\ge2.
\end{align*}
to recursively define $\bar{\hat{f}}_k$, truncated approximations for $\hat{f}_k$. In particular, for $k=1$,
\begin{equation}\label{eq:fhb1}
\bar{\hat{f}}_1
:= \frac{e^{t_1}}{\sin(x_0)}
\left[
u(x_0, t_1)
- \sum_{j=1}^{n_1} e^{-j^2 t_1}
\left( \int_0^{t_1} e^{j^2 s} \hat{F}_j(s)ds \right) \sin(jx_0)
\right],
\end{equation}
and, for $k \ge 2$,
\begin{equation}\label{eq:fhbk}
\bar{\hat{f}}_k
:= \frac{e^{k^2 t_k}}{\sin(kx_0)}
\left[
u(x_0, t_k)
- \sum_{j=1}^{k-1} e^{-j^2 t_k} \bar{\hat{f}}_j \sin(jx_0)
- \sum_{j=1}^{n_k} e^{-j^2 t_k}
\left( \int_0^{t_k} e^{j^2 s} \hat{F}_j(s)ds \right) \sin(jx_0)
\right],
\end{equation}
where the truncation index $n_k$ is defined by
\begin{equation}\label{def:n_k}
n_k := C e^{(k+1)^2 t_k/2},\qquad
C := \frac{2}{\pi}\max_{0\le s\le T}
\left\|\frac{\partial}{\partial x}F(\cdot,s)\right\|_{L^1(0,\pi)}.
\end{equation}
The parameter $n_k$ ensures that the tail of the source term is negligible relative to the reconstruction error.

\section{Main results}
\label{sec:main-results}

\begin{lemma}\label{lem:2}
Let $T>0$ and define the decreasing sequence of measurement times
\begin{equation}\label{def:tj}
t_j = \frac{(2j -1)!}{8^{j-1} j! (j -1)!} T,\qquad j = 1, 2, \dots, n.
\end{equation}
If $f \in \mathcal{F}_r$ with $r \ge 2$ and $F \in C^1((0,\pi]\times[0,T])$ satisfies homogeneous boundary conditions, then
\begin{equation}\label{ineq:fmfs}
\left|(\hat{f}_j - \bar{\hat{f}}_j) \sin(jx_0)\right|
\le 2^j e^{-(2j+1)t_j},\qquad j=1,2,\dots,n.
\end{equation}
\end{lemma}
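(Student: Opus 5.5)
Let $e_j := (\hat f_j-\bar{\hat f}_j)\sin(jx_0)$. We argue by strong induction on $j$. Subtracting \eqref{eq:fhbk} from the exact identity for $\hat f_k\sin(kx_0)$ derived just before it, the measurement $u(x_0,t_k)$ cancels, and
\[
e_k = e^{k^2t_k}\Bigg[-\sum_{j=1}^{k-1}e^{-j^2t_k}e_j-\sum_{j=k+1}^\infty e^{-j^2t_k}\hat f_j\sin(jx_0)-\sum_{j>n_k}e^{-j^2t_k}\Big(\int_0^{t_k}e^{j^2s}\hat F_j(s)\,ds\Big)\sin(jx_0)\Bigg].
\]
For $k=1$ the first sum is absent. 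This gives three error sources, which we bound separately: propagated earlier errors (I), the spectral tail of $f$ (II), and the truncated source tail (III).

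\textbf{Term (II).} Since $t_k$ is decreasing in $k$ and $j\ge k+1$, we have $e^{-j^2t_k}\le e^{-(k+1)^2t_k}$. Combined with $|\hat f_j\sin(jx_0)|\le j^{-r}$ (Lemma \ref{lem:1} with $r\ge2$), the contribution to $e_k$ is at most
\[
e^{k^2t_k}e^{-(k+1)^2t_k}\,\frac{k^{-r+1}}{r-1}\le e^{-(2k+1)t_k}.
\]

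\textbf{Term (III).} Integrate by parts in $x$ using the homogeneous boundary conditions on $F$. This gives $|\hat F_j(s)|\le C/j$ with $C$ as in \eqref{def:n_k}. Hence
\[
\Big|e^{-j^2t_k}\int_0^{t_k}e^{j^2s}\hat F_j(s)\,ds\Big|\le \frac{C}{j^3}\,(1-e^{-j^2t_k})\le \frac{C}{j^3}.
\]
The tail sum over $j>n_k$ is then at most $C/(2n_k^2)=C^{-1}e^{-(k+1)^2t_k}/2$, so after multiplying by $e^{k^2t_k}$ it is again $O(e^{-(2k+1)t_k})$. If $C<1$ this bound is wasteful. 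In that case I would instead use $n_k\ge\max(1,\cdot)$, or sharpen by using the factor $C$ in both places, so that the contribution is at most $\tfrac12 e^{-(2k+1)t_k}$. Together, (II) and (III) contribute at most $2e^{-(2k+1)t_k}$, which already settles the base case $k=1$ (indeed $2^1$).

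\textbf{Term (I), the main obstacle.} By the inductive hypothesis,
\[
e^{k^2t_k}\sum_{j<k}e^{-j^2t_k}|e_j|\le \sum_{j<k}2^j e^{(k^2-j^2)t_k-(2j+1)t_j}.
\]
We need this to be at most $(2^k-2)e^{-(2k+1)t_k}$. Since $\sum_{j<k}2^j=2^k-2$, it suffices to show termwise that
\[
(k^2-j^2+2k+1)\,t_k\le(2j+1)\,t_j,\qquad\text{i.e.}\qquad ((k+1)^2-j^2)\,t_k\le(2j+1)\,t_j \quad\text{for } j<k.
\]
This is where the specific sequence \eqref{def:tj} enters. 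Writing $t_j=T\binom{2j-1}{j}/8^{j-1}$, the ratio is
\[
\frac{t_{j+1}}{t_j}=\frac{(2j+1)}{4(j+1)}<\frac12 .
\]
So $t_k/t_j\le\prod_{i=j}^{k-1}\frac{2i+1}{4(i+1)}$. It remains to check
\[
\big((k+1)^2-j^2\big)\prod_{i=j}^{k-1}\frac{2i+1}{4(i+1)}\le 2j+1 .
\]
For $k=j+1$ this reads $(2j+3)(2j+1)/(4(j+1))\le 2j+1$, which holds. Each further step multiplies the left side by at most roughly $\tfrac12\cdot\frac{(k+2)^2-j^2}{(k+1)^2-j^2}\le 1$ once $k$ exceeds $j+1$; the edge case $j=1$, $k=3$ must be verified by hand. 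I expect this verification, and making the constants in (III) fit into the budget of 2, to be the delicate part. Summing the three contributions gives $|e_k|\le 2^k e^{-(2k+1)t_k}$, completing the induction.
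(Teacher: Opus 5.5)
You follow the paper's proof step for step. The error recursion is the same, and so is the split into propagated errors (I), spectral tail (II) and source tail (III). You also use the same termwise inequality $((k+1)^2-j^2)t_k\le(2j+1)t_j$ and the same budget $(2^k-2)+1+1$. Your check of that inequality via $t_{j+1}/t_j=\frac{2j+1}{4(j+1)}$ is more explicit than the paper's one-line appeal to ``backward induction'', and it is sound apart from two slips:
at $k=j+1$ one has $(k+1)^2-j^2=4(j+1)$, not $2j+3$, so the inequality holds with \emph{equality}; the recursion for $t_j$ is built exactly for this. Second, no edge case needs checking by hand. The bound $\frac12\cdot\frac{(k+2)^2-j^2}{(k+1)^2-j^2}\le 1$ is equivalent to $k^2\ge j^2+2$, which holds for every $k\ge j+1$. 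The factor $\frac{2k+1}{4(k+1)}<\frac12$ is exact, not ``rough''.

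The genuine gap is term (III), which you half-noticed. With $n_k=Ce^{(k+1)^2t_k/2}$ your computation gives $\frac{1}{2C}e^{-(2k+1)t_k}$. This fits the unit budget only if $C\ge\frac12$, and even then only modulo rounding of $n_k$. For $C<\frac12$ the bound is not ``wasteful'' but too large, so the induction does not close. Your repairs do not all work:
\begin{itemize}
\item ``Using the factor $C$ in both places'' is exactly the computation that produced $1/(2C)$.
\item Flooring, $n_k=\max(1,Ce^{(k+1)^2t_k/2})$, does not help; in the example below $n_1=1$ already.
\item Rescaling the prefactor, $n_k=\max(1,C)\,e^{(k+1)^2t_k/2}$, does close the estimate. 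It gives (III) $\le\frac{C}{2\max(1,C)^2}e^{-(2k+1)t_k}\le\frac12 e^{-(2k+1)t_k}$, with room left for rounding. But it changes the scheme, so it proves a different lemma.
\end{itemize}

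The paper's proof has the same hole: it simply asserts (III) $\le e^{-(2k+1)t_k}$. In fact the lemma is false as stated for small $C$. Take $f=0$, $F=\epsilon\sin 2x$, $T=2$ and $\epsilon=\frac{\pi}{8}e^{-4}$. Then $C=8\epsilon/\pi=e^{-4}$, so $n_1=Ce^{2t_1}=1$ and the $j=2$ source mode is never subtracted. This gives $|(\hat{f}_1-\bar{\hat{f}}_1)\sin x_0|=\frac{\epsilon}{4}e^{T}(1-e^{-4T})|\sin 2x_0|\approx 1.3\times10^{-2}\,|\sin 2x_0|$. That exceeds $2e^{-3t_1}=2e^{-6}\approx 5.0\times 10^{-3}$ whenever $|\sin 2x_0|>0.4$. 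So what your argument (and the paper's) actually establishes is the lemma under the extra hypothesis $C\ge\frac12$ (up to rounding), or the lemma with the rescaled truncation above.
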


\begin{remark}[Choice of time sequence]
The specific choice of $t_j$ in \eqref{def:tj} differs from the exponentially growing sequences (e.g., $t_j \sim \rho^j$) often used in previous works such as \cite{DeVoreZuazua2014}. Our sequence remains bounded within a practical horizon $T$ while providing the specific decay rate required to counteract the growth of the condition number in the recursive inversion. This improves the quality of the reconstruction by avoiding the need for measurements at arbitrarily large times.
\end{remark}

\begin{proof}
The proof follows the induction strategy developed in \cite{DeVoreZuazua2014} for the homogeneous case, with improved choice of time sequence for measurements and additional estimates for the contribution of the source controlled via the choice of $n_k$ in \eqref{def:n_k}. For completeness, we reproduce the main steps, adapting the argument to the non-homogeneous setting.

We start from verifying the desired inequality \ref{ineq:fmfs} for $j=1$. In fact, LEMMA \ref{lem:1} applies to get
\begin{eqnarray*}
\left|(\hat{f}_1 - \bar{\hat{f}}_1)\sin(x_0)\right| &=& 
\left| e^{t_1}\sum_{j=2}^\infty e^{-j^2 t_1} \hat{f}_j \sin(j x_0)
 +  e^{t_1} \sum_{j=n_1+1}^{\infty} e^{-j^2 t_1} \left( \int_0^{t_1} e^{j^2 s} \hat{F}_j(s) ds \right) \sin(j x_0)
\right| \\
&\le& \sum_{j=2}^\infty  |\hat{f}_j\sin(jx_0)| e^{(1-j^2) t_1} 
+ \sum_{j=n_1+1}^{\infty} e^{(1-j^2) t_1} \left( \int_0^{t_1} e^{j^2 s} |\hat{F}_j(s)| ds \right) \\
&\le&  \frac{1}{r-1}e^{-3 t_1} + 
\sum_{j=n_1+1}^{\infty} e^{(1-j^2) t_1} 
\max_{0\le s\le t_1}|\hat{F}_j(s)| \left( \int_0^{t_1} e^{j^2 s}  ds\right)\\
&\le&  e^{-3 t_1} + 
\sum_{j=n_1+1}^{\infty} \frac{e^{t_1}}{j^2} 
\max_{0\le s\le t_1}|\hat{F}_j(s)| \\
&\le& 2 e^{-3 t_1}
\end{eqnarray*}
where, in the last inequality, we used the choice of $n_1$ from (\ref{def:n_k}) based on the decay estimate of $|\hat{F}_j(s)|$, e.g. taking the integration by parts,
$$|\hat{F}_j(s)|=\frac{2}{\pi} \int_0^{\pi}
\left|\frac{\partial}{\partial x}F(x,s) \frac{\cos(jx)}{j}\right| dx
\le\frac{2}{j\pi}\left\|\frac{\partial}{\partial x}F(x,s)\right\|_{L^1(0,\pi)}
\le \frac{C_1}{j}.
$$
For $2\le k \le n$, suppose that the inequality (\ref{ineq:fmfs}) holds for $j=1, 2, ..., k-1$. Then
\begin{eqnarray*}
\left|(\hat{f}_k - \bar{\hat{f}}_k)\sin(k x_0)\right| 
 &=& \left| - \sum_{j=1}^{k-1} e^{(k^2-j^2)t_k} (\hat{f}_j-\bar{\hat{f}}_j) \sin(j x_0)  
 - \sum_{j=k+1}^\infty e^{(k^2-j^2)t_k} \hat{f}_j \sin(j x_0)\right. \\ 
 &&\qquad \left. +  \sum_{j=n_k+1}^{\infty} e^{(k^2-j^2) t_k} \left( \int_0^{t_k} e^{j^2 s} \hat{F}_j(s) ds \right) \sin(j x_0) \right| \\
&\le& 
  \sum_{j=1}^{k-1} e^{(k^2-j^2)t_k} \left|(\hat{f}_j-\bar{\hat{f}}_j) \sin(j x_0)\right|  
 + \sum_{j=k+1}^\infty e^{(k^2-j^2)t_k} |\hat{f}_j \sin(j x_0)|\\
 &&\qquad  +  \sum_{j=n_k+1}^{\infty} e^{(k^2-j^2) t_k} \left( \int_0^{t_k} e^{j^2 s} |\hat{F}_j(s)| ds \right) \\ 
&\equiv& \mbox{(I)} + \mbox{(II)} + \mbox{(III)}
 \end{eqnarray*}
Note that the choice of the sequence $t_j$ leads to
\[
((k+1)^2-j^2)t_k-(2j+1)t_j\le0, \quad j=1,2, ..., k,
\]
which can be shown by backward induction method.
Therefore (I) can be estimated as
\begin{eqnarray*}
\sum_{j=1}^{k-1} e^{(k^2-j^2)t_k} \left|(\hat{f}_j-\bar{\hat{f}}_j) \sin(j x_0)\right|
&\le&   
  \sum_{j=1}^{k-1}2^j e^{(k^2-j^2)t_k-(2j+1)t_j} \\
&=&   
e^{-(2k+1)t_k} \sum_{j=1}^{k-1}2^j e^{((k+1)^2-j^2)t_k-(2j+1)t_j} \\
&\le&
e^{-(2k+1)t_k} \sum_{j=1}^{k-1}2^j 
= e^{-(2k+1)t_k} (2^k-2).
\end{eqnarray*}
For (II), LEMMA \ref{lem:1} applies to get
\[
\sum_{j=k+1}^\infty e^{(k^2-j^2)t_k} |\hat{f}_j \sin(j x_0)|
\le \sum_{j=k+1}^\infty e^{-(2k+1)t_k} |\hat{f}_j \sin(j x_0)|
\le \frac{k^{-r+1}}{r-1}e^{-(2k+1)t_k} 
\le e^{-(2k+1)t_k}.
\]
(III) can be estimated using the similar argument for the case $j=1$:
$$\sum_{j=n_k+1}^{\infty} e^{(k^2-j^2) t_k} \left( \int_0^{t_k} e^{j^2 s} |\hat{F}_j(s)| ds \right)\le e^{-(2k+1)t_k}.$$
Combining the cases (I) through (III), we obtained the desired inequality (\ref{ineq:fmfs}) for $j=k$ which completes the induction method.

\end{proof}

For the recovery process, we require the spatial point $x_0$ to satisfy a Diophantine condition. We select $x_0 \in (0,\pi)$ such that
\begin{equation}\label{def:x0}
\mathrm{dist}(kx_0, \{0,\pi,2\pi,\dots\}) \ge \frac{c_0 \pi}{k}, \quad k=1,2,\dots,
\end{equation}
where $c_0 > 0$. The existence of such points is guaranteed by results in Diophantine approximation; e.g. Bugeaud \cite[p.~245]{Bugeaud2012}.

\begin{thm}\label{thm:main}
Let $f \in \mathcal{F}_r$ ($r\ge2$) and $F\in C^1$ as above. Suppose $x_0$ satisfies \eqref{def:x0}, and measurements are taken at times $t_k$ given by \eqref{def:tj}. Then for sufficiently large $n$, the approximation $\bar{f}_n(x) = \sum_{j=1}^{\lceil n/2\rceil} \bar{\hat{f}}_j \sin(jx)$ satisfies
\[
\|f - \bar{f}_n\|_{L^2(0,\pi)}
\le C(x_0,r)\,n^{-r},
\]
for a constant $C(x_0,r)>0$ independent of $n$.
\end{thm}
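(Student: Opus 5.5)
The plan is to split the error into a spectral truncation tail and an accumulated coefficient error, and to use Parseval for the sine basis on $(0,\pi)$, namely $\|g\|_{L^2(0,\pi)}^2=\frac{\pi}{2}\sum_j|\hat g_j|^2$. With $m:=\lceil n/2\rceil$,
\[
\|f-\bar f_n\|_{L^2(0,\pi)}^2=\frac{\pi}{2}\Bigl(\sum_{j=1}^{m}|\hat f_j-\bar{\hat f}_j|^2+\sum_{j=m+1}^{\infty}|\hat f_j|^2\Bigr).
\]
The tail is routine. Since $f\in\mathcal F_r$,
\[
\sum_{j>m}|\hat f_j|^2\le m^{-2r}\sum_{j>m}j^{2r}|\hat f_j|^2\le m^{-2r}\le 2^{2r}n^{-2r}.
\]
This already produces the rate $n^{-r}$ after taking square roots.

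For the coefficient part I would first remove the factor $\sin(jx_0)$ using the Diophantine condition \eqref{def:x0}. Since $|\sin y|\ge \frac{2}{\pi}\,\mathrm{dist}(y,\pi\mathbb Z)$, we get $|\sin(jx_0)|\ge 2c_0/j$. Combined with Lemma~\ref{lem:2} this gives
\[
|\hat f_j-\bar{\hat f}_j|\le \frac{j}{2c_0}\,2^j e^{-(2j+1)t_j},\qquad j=1,\dots,m.
\]
It then remains to show
\[
\sum_{j=1}^m j^2 4^j e^{-2(2j+1)t_j}\le C(x_0,r)^2 n^{-2r},
\]
and adding the two contributions would give the theorem with a constant depending only on $c_0$ (hence $x_0$) and $r$.

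The main obstacle is this last summation. Here $t_j=T\binom{2j-1}{j}8^{-(j-1)}$, and by Stirling $\binom{2j-1}{j}\sim 4^j/(2\sqrt{\pi j})$, so $t_j\asymp T\,2^{-j}j^{-1/2}$. In particular $(2j+1)t_j\to 0$, so the right-hand side of \eqref{ineq:fmfs} grows like $2^j$ rather than decaying. As literally written, the sum is therefore not $O(n^{-2r})$, and taking $n$ large only makes it worse.

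To close the argument I would first try to exploit the freedom in $T$ and let the horizon depend on $n$. If $T=T_n$ is chosen so that $(2j+1)t_j\ge j\log 2+(r+2)\log n$ for all $j\le m$, then each term is at most $n^{-2r-4}$ and the sum is $O(n^{-2r})$. Since $t_j\asymp T2^{-j}j^{-1/2}$, this forces $T_n\gtrsim 2^{m}\sqrt m\,(m+r\log n)$, which is exponentially large in $n$. That is exactly the regime the remark after Lemma~\ref{lem:2} says the method avoids.

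Failing that, I would re-examine the induction in Lemma~\ref{lem:2}, namely the inequality $((k+1)^2-j^2)t_k\le(2j+1)t_j$ and the source tail (III) controlled by $n_k$. The aim would be an error bound of the form $\varepsilon\cdot 2^j e^{-(2j+1)t_j}$ carrying an extra small factor $\varepsilon$, for instance from measurement accuracy or from a choice of $n_k$ depending on $n$, which could then be tuned to $n^{-r}$. Only after securing such a decaying bound would the Parseval bookkeeping above finish the proof.
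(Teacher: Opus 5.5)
Your reduction is the same as the paper's: Parseval, the tail bound $\sum_{k>\lceil n/2\rceil}|\hat f_k|^2\le 2^{2r}n^{-2r}$, $|\sin(kx_0)|\ge 2c_0/k$ from \eqref{def:x0}, and Lemma~\ref{lem:2}. Your objection to the remaining sum is correct, and the paper does not overcome it. After the valid step $e^{-2(2k+1)t_k}\le e^{-6t_{\lceil n/2\rceil}}$ and summing $\sum_{k\le\lceil n/2\rceil}4^k$, the paper reaches $\frac{\pi}{2}\bigl(\frac{1}{3c_0^2}e^{-6t_{\lceil n/2\rceil}+2(r+1)\ln n+2\lceil n/2\rceil\ln 2}+4^r\bigr)n^{-2r}$. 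It then simply declares this to be $\le C(x_0,r)n^{-2r}$. Since $t_{\lceil n/2\rceil}\to0$, the exponential factor grows like $n^{2r+2}2^{n}$, so that last inequality is false. You have not missed an idea; the published argument breaks exactly where you say it does.

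In fact the statement itself is false for fixed $T$. So your second repair (sharpening Lemma~\ref{lem:2}, tuning $n_k$, or appealing to measurement accuracy) cannot succeed. Take $F\equiv0$ and $f=2^{-r}\sin(2x)\in\mathcal F_r$, with exact data. Because $t_1=T$ for every $n$, \eqref{eq:fhb1} gives the $n$-independent value $\bar{\hat f}_1=\frac{e^{T}}{\sin x_0}\,2^{-r}e^{-4T}\sin(2x_0)=2^{1-r}e^{-3T}\cos x_0$, while $\hat f_1=0$. Moreover $\cos x_0\neq0$, since \eqref{def:x0} with $k=2$ rules out $x_0=\pi/2$. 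Hence $\|f-\bar f_n\|_{L^2(0,\pi)}\ge\sqrt{\pi/2}\,2^{1-r}e^{-3T}|\cos x_0|>0$ for all $n$, which is incompatible with any bound $C n^{-r}$.

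Any true version must let the times depend on $n$; this example alone forces $t_1\ge\frac{r}{3}\ln n-O(1)$. Along the Lemma~\ref{lem:2} route, your first repair with $T_n$ exponentially large is what is needed, and that contradicts the paper's fixed-horizon remark. There are two harmless slips in that repair. Each summand is bounded by $j^2n^{-2r-4}$ rather than $n^{-2r-4}$. Also, since $(2m+1)t_m\asymp T2^{-m}\sqrt m$, the requirement reads $T_n\gtrsim 2^m m^{-1/2}(m+r\log n)$. Neither changes your conclusion.
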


\begin{proof}
Using \eqref{def:x0}, we have
$$ 
|\sin(kx_0)| \ge \frac{2c_0}{k},\qquad k=1,2,\dots.
$$
Parseval's identity and Lemma~\ref{lem:2} imply
\begin{eqnarray*}
\| f - \bar{f}_n \|_{L^2}^2 
  &=&\frac{\pi}{2}\left(\sum_{k=1}^{\lceil n/2 \rceil} |\hat{f}_k - \bar{\hat{f}}_k|^2 
       + \sum_{k=\lceil n/2 \rceil+1}^{\infty} |\hat{f}_k|^2\right) \\ 
  &\le&\frac{\pi}{2}\left(
  \sum_{k=1}^{\lceil n/2 \rceil} \frac{1}{|\sin(kx_0)|^2} 2^{2k} e^{-2(2k+1)t_k} 
  + \sum_{k=\lceil n/2 \rceil+1}^{\infty}\frac{1}{(\lceil n/2 \rceil+1)^{2r}} k^{2r}|\hat{f}_k|^2\right) \\ 
  &\le&\frac{\pi}{2}\left(
  \sum_{k=1}^{\lceil n/2 \rceil} \frac{k^2}{4c_0^2} 2^{2k} e^{-2(2k+1)t_k} 
  + \frac{2^{2r}}{n^{2r}}\right) \\ 
  &\le&\frac{\pi}{2}\left(\frac{e^{-6t_{\lceil n/2 \rceil}}}{4c_0^2}
  \sum_{k=1}^{\lceil n/2 \rceil} k^2 4^{k}
  + \frac{2^{2r}}{n^{2r}}\right) \\ 
 &\le&\frac{\pi}{2}\left(\frac{e^{-6t_{\lceil n/2 \rceil}+2r\ln n}}{4c_0^2}
  \sum_{k=1}^{\lceil n/2 \rceil}n^2 4^k
  + 4^{r}\right) {n^{-2r}}\\ 
 &\le&\frac{\pi}{2}\left(\frac{1}{3{c_0^2}} e^{-6t_{\lceil n/2 \rceil}+2(r+1)\ln n+ 
       2 \lceil n/2 \rceil \ln 2 }
   + 4^{r}\right){n^{-2r}} \\ 
  &\le& C(x_0,r) n^{-2r}.
\end{eqnarray*}
\end{proof}

\section{Numerical experiments}
\label{sec:numerics}

In this section, we illustrate the reconstruction scheme. We consider the target initial condition
\[
f(x) = \frac{1}{8}\sin(2x) + \frac{1}{18}\sin(3x)
\]
and the non-homogeneous source term
\[
F(x,t) = e^{-t} \sin x.
\]
Using analytically generated measurements, we examine the convergence of the reconstruction as the number of measurements $n$ increases. See Figure \ref{fig:reconstruction} which displays the true initial condition $f(x)$ along with the reconstructed approximations $\bar{f}_n(x)$ for $n \in \{2, 4, 10\}$.

\begin{figure}[htbp]
\centering
\includegraphics[width=0.8\textwidth]{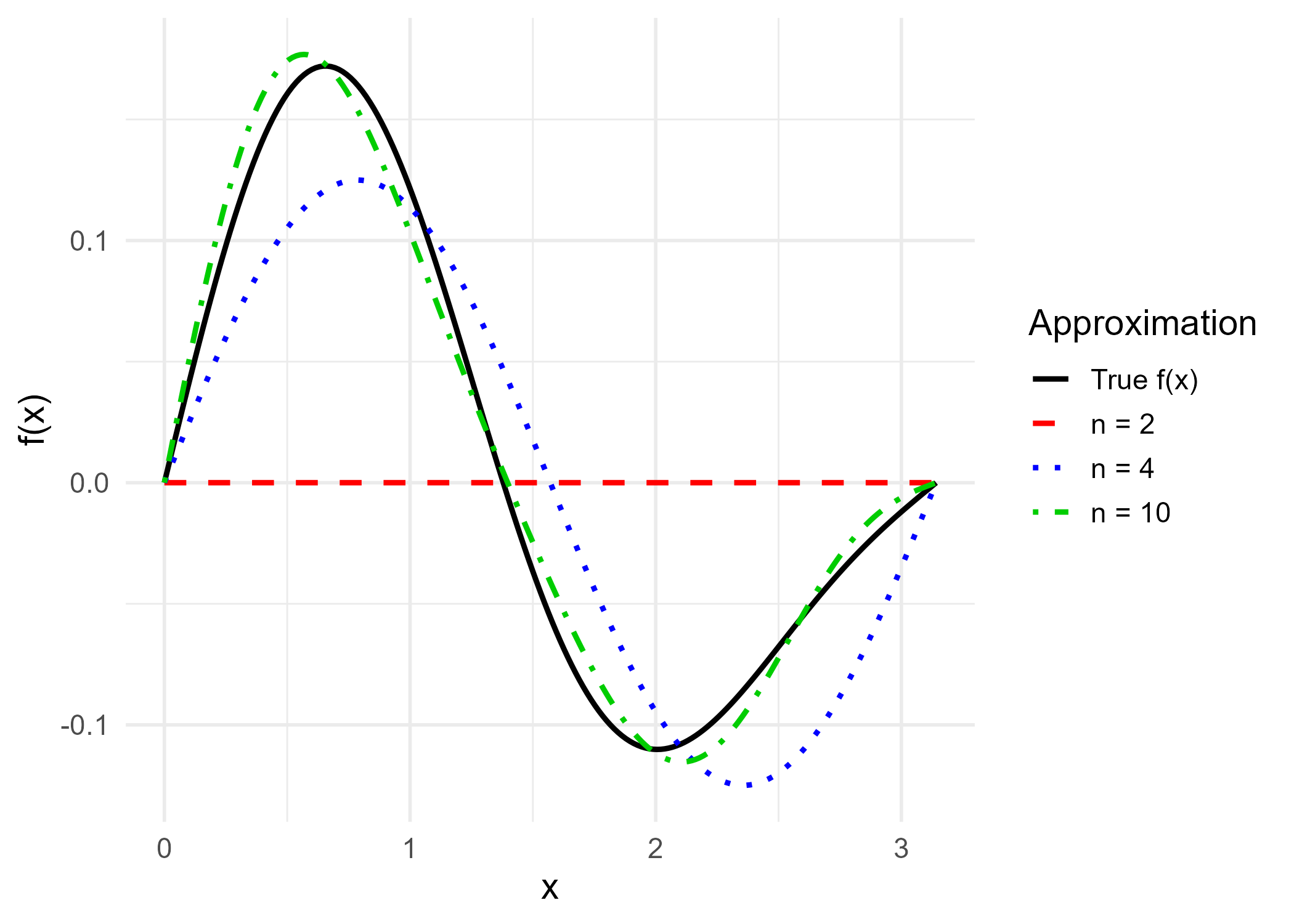}
\caption{Reconstruction of the initial condition for $n=2$, $n=4$, and $n=10$.}
\label{fig:reconstruction}
\end{figure}

For $n=2$, the reconstruction captures the low-frequency trend but misses details. At $n=4$, the shape is significantly improved. By $n=10$, the reconstructed curve shows a good approximation of the true initial condition, confirming the theoretical convergence rates.

\section{Conclusions}
\label{sec:conclusion}

We have presented a constructive algorithm for recovering the initial temperature of a non-homogeneous heat equation from discrete pointwise measurements. By carefully accounting for the source term's contribution and utilizing a refined measurement time sequence, we extended the results of DeVore and Zuazua to the non-homogeneous setting. The proposed time sequence offers practical advantages by keeping measurement times within a fixed horizon while maintaining stability. Theoretical analysis proves algebraic convergence rates, and numerical tests validate that the method yields accurate results with a feasible number of measurements.

\section*{Acknowledgments}
This work was partially supported by the National Science Foundation under Grant No. 2015553, the Pi Mu Epsilon Honors Society, the IU East Undergraduate Student Research Expense Fund, and the IU East Faculty Research \& Creative Activity Support Fund.


\end{document}